\def\ps@headings{%
\def\@oddhead{\mbox{}\scriptsize\rightmark \hfil \thepage}%
\def\@evenhead{\scriptsize\thepage \hfil \leftmark\mbox{}}%
\def\@oddfoot{}%
\def\@evenfoot{}}
\newtheorem{theorem}{Theorem}
\newtheorem{lemma}{Lemma}
\newcommand{\comment}[1]{}
\begin{document}
\title{Online Learning in Opportunistic Spectrum Access: A Restless Bandit Approach}
\author{\IEEEauthorblockN{Cem Tekin, Mingyan Liu}
\IEEEauthorblockA{Department of Electrical Engineering and Computer Science\\
University of Michigan, Ann Arbor, Michigan, 48109-2122\\
Email: \{cmtkn, mingyan\}@umich.edu}
}

\maketitle

\begin{abstract}  
We consider an opportunistic spectrum access (OSA) problem where the time-varying condition of each channel (e.g., as a result of random fading or certain primary users' activities) is modeled as an arbitrary finite-state Markov chain.  At each instance of time, a (secondary) user probes a channel and collects a certain reward as a function of the state of the channel (e.g., good channel condition results in higher data rate for the user).  Each channel has potentially different state space and statistics, both unknown to the user, who tries to learn which one is the best as it goes and maximizes its usage of the best channel.  The objective is to construct a good online learning algorithm so as to minimize the difference between the user's performance in total rewards and that of using the best channel (on average) had it known which one is the best from a priori knowledge of the channel statistics (also known as the regret).  This is a classic exploration and exploitation problem and results abound when the reward processes are assumed to be {\em iid}.  Compared to prior work, the biggest difference is that in our case the reward process is assumed to be Markovian, of which {\em iid} is a special case.  In addition, the reward processes are {\em restless} in that the channel conditions will continue to evolve independent of the user's actions.  This leads to a restless bandit problem, for which there exists little result on either algorithms or performance bounds in this learning context to the best of our knowledge.  In this paper we introduce an algorithm that utilizes regenerative cycles of a Markov chain and computes a sample-mean based index policy, and show that under mild conditions on the state transition probabilities of the Markov chains this algorithm achieves logarithmic regret uniformly over time, and that this regret bound is also optimal.  We numerically examine the performance of this algorithm along with a few other learning algorithms in the case of an OSA problem with Gilbert-Elliot channel models, and discuss how this algorithm may be further improved (in terms of its constant) and how this result may lead to similar bounds for other algorithms. 

\end{abstract}
\IEEEpeerreviewmaketitle

\section{Introduction} \label{sec:intro}

In this paper we study the following opportunistic spectrum access (OSA) problem.  A (secondary) user has access to a set of $K$ channels, each of time-varying conditions as a result of random fading and/or certain primary users' activities.  Each channel is thus modeled as an arbitrary finite-state discrete-time Markov chain.  At each time step, the secondary user (simply referred to as {\em the user} for the rest of the paper for there is no ambiguity) probes a channel to find out its condition, and is allowed to use the channel in a way consistent with its condition.  For instance, good channel conditions result in higher data rates or lower power for the user and so on.  This is modeled as a reward collected by the user, the reward being a function of the state of the channel or the Markov chain.

Channels have potentially different state spaces and statistics, both unknown to the user.  The user will thus try to learn which one is the best and maximizes its usage of the best channel.  
Within this context, the player's performance is typically measured by the notion of {\em regret}. It is defined as the difference between the expected reward that can be gained by an ``infeasible'' or ideal policy, i.e., a policy that requires either a priori knowledge of some or all statistics of the arms or hindsight information, and the expected reward of the player's policy.  
The most commonly used infeasible policy is the {\em best single action} policy, that is optimal among all policies that continue to play the same arm.  An ideal policy could play for instance the arm that has the highest expected reward (which requires statistical information but not hindsight).  This type of regret is sometimes also referred to as the {\em weak regret}, see e.g., work by Auer et al. \cite{auer2}.  In this paper we will only focus on this definition of regret. 

The above can be cast as a single player multiarmed bandit problem, where the reward of each channel (also referred to as an {\em arm} in the bandit problem literature) is generated by a Markov chain with unknown statistics.  Furthermore, it is a {\em restless} bandit problem because the state of each Markov chain evolves independent of the action of the user (whether the channel is probed or not);  by contrast, in a classic multiarmed bandit problem the state of a Markov chain only evolves when it is acted upon and stays frozen otherwise (also referred to as {\em rested}).  The restless nature of the Markov chains follows naturally from the fact that channel conditions are governed by external factors like random fading, shadowing, and primary user activity. 

In the remainder of this paper a channel will also be referred to as an {\em arm}, the user as the {\em player}, and probing a channel as {\em playing or selecting an arm}.  This problem is a typical example of the tradeoff between {\em exploration} and {\em exploitation}.  On the one hand, the player needs to sufficiently explore all arms so as to discover with accuracy the best arm and avoid getting stuck playing an inferior one erroneously believed to be the best.  On the other hand, he needs to avoid spending too much time sampling the arms and collecting statistics and not playing the best arm often enough to get a high return.  

In most prior work on the class of multiarmed bandit problems, originally proposed by Robbins \cite{robbins2}, 
the rewards are assumed to be independently drawn from a fixed (but unknown) distribution.  
Its worth noting that with this iid assumption on the reward process, whether an arm is rested or restless is inconsequential for the following reasons.  
Since the rewards are independently drawn each time, whether an unselected arm remains still or continues to change does not affect the reward the arm produces the next time it is played whenever that may be.  
This is clearly not the case with Markovian rewards.  
In the rested case, since the state is frozen when an arm is not played, 
the state in which we next observe the arm is {\em independent} of how much time elapses before we play the arm again.   In the restless case, the state of an arm continues to evolve, 
thus the state in which we next observe it is now {\em dependent} on the amount of time that elapses between two plays of the same arm.  This makes the problem significantly more difficult.  

To the best of our knowledge, there has been no study of the restless bandits in this learning context, either in terms of algorithms or performance bounds.  Here lies the main contribution of the present study.  In this paper we give the first result on the existence of order-optimal policies for the above restless bandit problem.  Specifically, we introduce an algorithm that utilizes regenerative cycles of a Markov chain and computes a sample-mean based index policy, and show that under mild conditions on the state transition probabilities 
this algorithm achieves logarithmic regret uniformly over time. 



Below we briefly summarize the most relevant results in the literature.  
Lai and Robbins in  \cite{lai1} model rewards as single-parameter univariate densities 
and give a lower bound on the regret and construct policies that achieve this lower bound which are called {\em asymptotically efficient} policies. This result is extended by Anantharam et al. in \cite{anantharam2} to the case of playing more than one arm at a time. Using a similar approach Anantharam et al. in \cite{anantharam1} develops index policies that are asymptotically efficient for arms with rewards driven by finite, irreducible, aperiodic and rested Markov chains with identical state spaces and single-parameter families of stochastic transition matrices.  Agrawal in \cite{agrawal1} considers sample mean based index policies for the iid model that achieve $O(\log n)$ regret, where $n$ is the total number of plays. 
Auer et al. in \cite{auer} also proposes sample mean based index policies for iid rewards with bounded support; these are derived from \cite{agrawal1}, but are simpler than the those in \cite{agrawal1} and are not restricted to a specific family of distributions. These policies achieve logarithmic regret uniformly over time rather than asymptotically in time, but have bigger constant than that in \cite{lai1}.   
In \cite{tekin} it is shown that the index policy in \cite{auer} is order optimal for Markovian rewards drawn from {\em rested} arms but not restricted to single-parameter families, under some assumptions on the transition probabilities.

Other works such as \cite{liu2, anandkumar,krishnamachari} consider the iid reward case in a multiuser setting; players selecting the same arms experience collision according to a certain collision model.  
We would like to mention another class of multiarmed bandit problems in which the statistics about the problem are known a priori and the state is observed perfectly; these are thus optimization problems rather than learning problems. The rested case is considered by Gittins \cite{gittins1} and the optimal policy is proved to be an index policy which at each time plays the arm with highest Gittins' index, while Whittle introduced the restless bandit problem in \cite{whittle}.  The restless bandit problem does not have a known general solution though special cases may be solved.  For instance, a myopic policy is shown to be optimal when channels are identical and bursty in \cite{ahmad} for an OSA problem formulated as a restless bandit problem with each channel modeled as a two-state Markov chain (the Gilbert-Elliot model).

\comment{
Our main results are summarized as follows. 
\begin{enumerate}
\item We show that when each arm is a finite state, aperiodic, irreducible Markov chain with positive rewards whose state transition probabilities are independent of the actions of the player and under the assumption that the multiplicative symmetrization $\hat{P}^i=(P^i)'P^i$ of the transition probability matrix $P^i$ of arm $i$ is irreducible for all arms, where $P'$ is the adjoint of $P$ on $l_2(\pi)$, there exist an algortihm that achieves logarithmic regret uniformly over time by updating sample mean based indices only in the regenerative sample paths corresponding to a prespecified state.  
\item We conjecture that the specification of the algorithm makes it possible to use any index policy and the order optimality holds provided that the index policy is order optimal for the {\em rested} Markovian multi-armed bandit problem.
\item We apply our algortihm to the OSA problem in the Gilbert-Elliot channel model and compare numerically the regret of our algortihm with the index policy from \cite{tekin} and a randomized algorithm from \cite{auer2} under different values of the exploration parameter and channel conditions. 
\end{enumerate}
} 


The remainder of this paper is organized as follows. In Section \ref{sec:problem} we formulate the single player restless bandit problem. In Section \ref{sec:policy} we introduce an algorithm based on regenerative cycles that employs sample-mean based indices.  The regret of this algorithm is analyzed and shown to be optimal in Section \ref{sec:policy2}. 
In Section \ref{sec:example} we numerically examine its performance along with a few other learning algorithms in the case of an OSA problem with Gilbert-Elliot channel models, and discuss how this algorithm may be further improved (in terms of its constant) and how this result may lead to similar bounds for other algorithms.  Finally, Section \ref{sec:conc} concludes the paper.


\section{Problem Formulation and Preliminaries} \label{sec:problem}

Consider $K$ arms (or channels) indexed by $i = 1,2, \cdots, K$.  The $i$th arm is modeled as a discrete-time, irreducible and aperiodic Markov chain with finite state space $S^i$. There is a stationary and positive reward associated with each state of each arm. 
Let $r^i_x$ denote the reward obtained from state $x$ of arm $i$, $x\in S^i$; this reward is in general different for different states. 
Let $P^i=\left\{p_{xy}^i, x,y \in S^i \right\}$ denote the transition probability matrix and $\boldsymbol{\pi}^i = \{\pi^i_x, x\in S^i \}$ the stationary distribution of arm $i$. 

Let $(P^i)'$ denote the {\em adjoint} of $P^i$ on $l_2(\pi)$, and let $\hat{P}^i=(P^i)'P$ denote the {\em multiplicative symmetrization} of $P^i$, where 
\begin{eqnarray*}
(p^i)'_{xy}=(\pi^i_y p^i_{yx})/\pi^i_x, ~ \forall x,y \in S^i.
\end{eqnarray*}

We will assume that the $P^i$s are such that $\hat{P}^i$s are irreducible.  To give a sense of how strong this assumption is, we note that one condition that guarantees that the $\hat{P}^i$s are irreducible is $p_{xx}>0, \forall x\in S^i, \forall i$.  For the application under consideration, this condition means that there is always positive probability for a channel to remain in the same state over one unit of time, which appears to be a natural and benign assumption\footnote{Alternatively we could adopt a stronger assumption that the Markov chains are aperiodic and reversible (note that aperiodicity and reversibility implies that the multiplicative symmetrization of $P^i$ is irreducible), in which case the same order results can be obtained with a different constant if we use a different large deviation bound from \cite{gillman} instead of Lemma 1.}. 


We assume the arms (or Markov chains) are mutually independent and are restless, i.e., their states will continue to evolve regardless of the user's actions.   The  mean reward of arm $i$, denoted by $\mu^i$, is the expected reward of arm $i$ under its stationary distribution: 
\begin{eqnarray}
\mu^i=\displaystyle\sum_{x \in S^i} r^i_x \pi^i_x ~. 
\end{eqnarray}
%
For convenience, we will use $^*$ in the superscript to denote the arm with the highest mean.  For instance, $\mu^*=\max_{1\leq i\leq K} \mu^i$, and so on.  We assume that the arm with the highest mean is unique. 

Consistent with the discrete-time Markov chain model, we will assume that the user's actions occur also in discrete time. 
For a policy $\alpha$ we define its regret $R^\alpha(n)$ as the difference between the expected total reward that can be obtained by playing the arm with the highest mean and the expected total reward obtained from using policy $\alpha$ up to time $n$.   Always playing the arm with the highest mean reward is referred to as the {\em best single-action policy}, and this arm will also be referred to as the {\em optimal} arm; accordingly the others will be referred to as {\em suboptimal} arms. 

Let $\alpha(t)$ denote the arm selected by policy $\alpha$ at $t$, $t=1, 2, \cdots$, and $x_{\alpha}(t)$ the state of arm $\alpha(t)$ at time $t$.  Then we have
\begin{eqnarray}
R^\alpha(n)=n\mu^*-E^\alpha\left[\displaystyle\sum_{t=1}^n r^{\alpha(t)} _{x_{\alpha}(t)} \right] ~. 
\end{eqnarray}
The objective is to examine how the regret $R^\alpha(n)$ behaves as a function of $n$ for a given policy $\alpha$ and to construct a policy whose regret is order-optimal, through appropriate bounding.  As we will show and as is commonly done, the key to bounding $R^\alpha(n)$ is to bound the expected number of plays of any suboptimal arm.
%

Our analysis utilizes the following known results on Markov chains; the proofs are not reproduced here for brevity.  The first is a result by Lezaud \cite{lezaud} that bounds the probability of a large deviation from the stationary distribution.
\begin{lemma}\label{lemma3} 
[Theorem 3.3 from \cite{lezaud}] Consider a finite-state, irreducible Markov chain $\left\{X_t\right\}_{t \geq 1}$ with state space $S$, matrix of transition probabilities $P$, an initial distribution $\mathbf{q}$ and stationary distribution $\mathbf{\pi}$. Let $N_{\mathbf{q}}=\left\|(\frac{q_x}{\pi_x}, x\in S)\right\|_2$. Let $\hat{P}=P'P$ be the multiplicative symmetrization of $P$ where $P'$ is the adjoint of $P$ on $l_2(\pi)$. Let $\epsilon=1-\lambda_2$, where $\lambda_2$ is the second largest eigenvalue of the matrix $\hat{P}$. $\epsilon$ will be referred to as the eigenvalue gap of $\hat{P}$. Let $f:S\rightarrow \mathcal{R}$ be such that $\sum_{y \in S} \pi_y f(y) =0$, $\left\|f\right\|_{\infty} \leq 1$ and $0<\left\|f\right\|^2_2 \leq 1$. If $\hat{P}$ is irreducible, then for any positive integer $n$ and all $0 < \gamma \leq 1$
\begin{eqnarray*}
P\left(\frac{\sum_{t=1}^n f(X_t)}{n} \geq \gamma \right) \leq N_q \exp\left[-\frac{n \gamma^2 \epsilon}{28}\right] ~. 
\end{eqnarray*}
\end{lemma}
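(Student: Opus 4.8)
The plan is to prove this as a Chernoff-type bound for Markov chains: control the Laplace transform of the partial sum $S_n=\sum_{t=1}^n f(X_t)$ by the top eigenvalue of a perturbed self-adjoint operator, and then estimate that eigenvalue in terms of the eigenvalue gap $\epsilon$ of $\hat P=P'P$ via perturbation theory.

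First I would fix $\theta>0$ and apply Markov's inequality to the exponential:
\[
  P\!\left(\frac{S_n}{n}\geq\gamma\right)\leq e^{-n\theta\gamma}\,E_{\mathbf q}\!\left[e^{\theta S_n}\right].
\]
The next step is to rewrite $E_{\mathbf q}[e^{\theta S_n}]$ in operator form: with $D$ the diagonal matrix having entries $e^{\theta f(x)}$ and operators viewed on $l_2(\pi)$, one has $E_{\mathbf q}[e^{\theta S_n}]=\langle\mathbf q/\boldsymbol\pi,\,(DP)^{n}\boldsymbol\pi\rangle_\pi$ up to a trivial boundary factor, and a Cauchy--Schwarz step peels off the initial distribution to give $E_{\mathbf q}[e^{\theta S_n}]\leq N_{\mathbf q}\,\|D^{1/2}PD^{1/2}\|_{l_2(\pi)}^{\,n}$, with $N_{\mathbf q}=\|(q_x/\pi_x)\|_2$ exactly as in the statement. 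Since $D^{1/2}PD^{1/2}$ need not be self-adjoint when $P$ is non-reversible, I would bound its operator norm by the largest eigenvalue of a self-adjoint operator built from the multiplicative symmetrization (in the reversible case $P'=P$, so $\hat P=P^2$ and this step is immediate); irreducibility of $\hat P$ is exactly what makes the relevant top eigenvalue simple so that Perron--Frobenius applies.

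The crux is to show that, writing $\beta(\theta)$ for this top eigenvalue, $\log\beta(\theta)\leq c_1\theta^2/\epsilon$ for all $\theta$ in a range $0<\theta\leq c_2\epsilon$, using the normalizations $\sum_{y\in S}\pi_y f(y)=0$, $\|f\|_\infty\leq 1$, and $\|f\|_2^2\leq 1$. This follows from Kato's analytic perturbation theory: $\beta(0)=1$; $\beta'(0)=E_\pi[f]=0$ (here the zero-mean hypothesis enters); and $\beta''(\theta)$ is controlled by $\mathrm{Var}_\pi(f)$ together with the norm of the reduced resolvent of $\hat P$ on the orthogonal complement of the stationary vector, which is $1/\epsilon$. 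Carrying the absolute constants through the Taylor remainder --- uniformly in $\theta$ over the admissible range, and with everything routed through $\hat P$ rather than $P$ because of non-reversibility --- is the step I expect to be the main obstacle.

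Finally, substituting back gives $P(S_n/n\geq\gamma)\leq N_{\mathbf q}\exp(-n\theta\gamma+n c_1\theta^2/\epsilon)$, and I would optimize over $\theta$. Choosing $\theta$ proportional to $\gamma\epsilon$ --- which lies in the admissible range precisely because $\gamma\leq 1$ --- turns the exponent into $-n\gamma^2\epsilon$ times an absolute constant, and a careful accounting of the constants from the previous paragraph yields the factor $1/28$. The restriction $0<\gamma\leq 1$ in the statement is exactly what keeps the optimizing $\theta$ inside the range where the perturbation estimate is valid.
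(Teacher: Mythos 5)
The paper does not actually prove this lemma: it is quoted verbatim as Theorem~3.3 of Lezaud's paper, and the authors explicitly defer the proof to that reference. So the comparison here is really between your sketch and Lezaud's original argument. On that score your outline is faithful to the genuine proof: the exponential Markov inequality, the operator formulation $E_{\mathbf q}[e^{\theta S_n}]\leq N_{\mathbf q}\,\|D^{1/2}PD^{1/2}\|_{l_2(\pi)}^n$ with the Cauchy--Schwarz step producing exactly the factor $N_{\mathbf q}$, the passage to the multiplicative symmetrization $\hat P=P'P$ via $\|A\|^2=\lambda_{\max}(A'A)$ to handle non-reversibility, Kato's perturbation theory with $\beta(0)=1$, $\beta'(0)=E_\pi[f]=0$, the reduced resolvent of norm $1/\epsilon$ controlling the second-order term, and the final optimization $\theta\propto\gamma\epsilon$ with $\gamma\leq 1$ keeping $\theta$ in the admissible range. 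This is the correct architecture and the hypotheses are deployed in the right places.

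However, as written the proposal is an outline rather than a proof, and the gap sits exactly at the step you flag yourself: establishing $\log\beta(\theta)\leq c_1\theta^2/\epsilon$ uniformly over $0<\theta\leq c_2\epsilon$ with explicit admissible constants. For a statement whose entire content beyond a qualitative large-deviation bound is the explicit constant $28$ in the exponent, this is not a routine remainder estimate: it requires summing the perturbation series for the top eigenvalue of the tilted symmetrized operator, bounding each term via the reduced resolvent, and verifying convergence of that series on the stated $\theta$-range --- this is where essentially all of the work in Lezaud's paper lies, and where the number $28$ actually comes from. Until that computation is carried out, neither the form of the exponent $n\gamma^2\epsilon/28$ nor the validity of the claimed range of $\theta$ is established. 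If your goal is only to justify importing the lemma, the honest course is to do what the paper does and cite Lezaud's Theorem~3.3; if your goal is a self-contained proof, the perturbation-series estimate must be written out in full.
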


The second is a result by Bremaud, which can be found in \cite{bremaud}.
\begin{lemma}
If $\left\{X_n\right\}_{n \geq 0}$ is a positive recurrent homogeneous Markov chain with state space $S$, stationary distribution $\pi$ and $\tau$ is a stopping time that is finite almost surely for which $X_{\tau}=x$ then for all $y \in S$
\begin{eqnarray*}
E\left[ \sum_{t=0}^{\tau-1} I(X_t=y) | X_0=x \right] = E[\tau | X_0=x]\pi_y ~. 
\end{eqnarray*}
\label{lastlemma}
\end{lemma}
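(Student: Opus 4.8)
\emph{Proof idea.} The plan is to recognize the left-hand side as a value of an invariant measure for $P$. Write $E_x[\,\cdot\,]$ for $E[\,\cdot\mid X_0=x]$ and define
\begin{eqnarray*}
\nu(y) := E_x\left[\sum_{t=0}^{\tau-1} I(X_t=y)\right], \quad y\in S.
\end{eqnarray*}
I would first show $\nu P=\nu$, then invoke uniqueness (up to a multiplicative constant) of the invariant measure of an irreducible positive recurrent chain to conclude $\nu=c\,\pi$ for some $c\ge 0$, and finally identify $c=E_x[\tau]$ by summing over $y$. One may assume $E_x[\tau]<\infty$ (otherwise the identity is read in $[0,\infty]$ and the same steps apply), which makes every $\nu(y)\le E_x[\tau]$ finite and, since all summands below are nonnegative indicators, legitimizes each interchange of sum and expectation by Tonelli's theorem.

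The heart of the argument is the invariance computation. Since $\tau$ is a stopping time, $\{t<\tau\}=\{\tau\le t\}^{c}\in\mathcal{F}_t:=\sigma(X_0,\dots,X_t)$, so conditioning on $\mathcal{F}_t$ and applying the Markov property one term at a time gives
\begin{eqnarray*}
\sum_{y\in S}\nu(y)\,p_{yz}
&=& \sum_{t\ge 0} E_x\left[I(t<\tau)\,p_{X_t,z}\right] \\
&=& \sum_{t\ge 0} E_x\left[I(t<\tau)\,I(X_{t+1}=z)\right] \\
&=& E_x\left[\sum_{t=1}^{\tau} I(X_t=z)\right].
\end{eqnarray*}
Now $\sum_{t=1}^{\tau} I(X_t=z)=\sum_{t=0}^{\tau-1} I(X_t=z)-I(X_0=z)+I(X_\tau=z)$, and because $X_0=X_\tau=x$ the last two terms cancel, so $\sum_{y}\nu(y)p_{yz}=\nu(z)$ for every $z$, i.e.\ $\nu P=\nu$. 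Since an irreducible positive recurrent chain has a unique invariant measure up to a multiplicative constant, this forces $\nu=c\,\pi$ for some $c\ge 0$; summing $\nu(z)=c\,\pi_z$ over $z\in S$ and using $\sum_z\pi_z=1$ identifies $c=\sum_z\nu(z)=E_x[\tau]$, because the occupation counts of all states over $\{0,\dots,\tau-1\}$ add up to $\tau$. Hence $\nu(y)=E_x[\tau]\,\pi_y$, which is the assertion of the lemma.

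The step I expect to demand the most care is this invariance computation: one has to be certain that $\{t<\tau\}$ is $\mathcal{F}_t$-measurable so that the Markov property can be moved inside the expectation term by term, and that Tonelli legitimately handles the double interchange over $t$ and over $y$ — which is why it is worth reducing to the case $E_x[\tau]<\infty$ and noting that every summand is a nonnegative indicator. An essentially equivalent alternative, which I would fall back on if this bookkeeping proved delicate, decomposes $[0,\tau]$ into the successive excursions of the chain away from $x$: by the strong Markov property these are i.i.d., one excursion contributing $\pi_y/\pi_x$ expected visits to $y$ and having expected length $1/\pi_x$ (the classical cycle formula, itself obtained by the same invariance argument applied to the first return time to $x$), after which Wald's identity applied to the number of completed excursions before $\tau$ — a stopping time for the excursion filtration with mean at most $E_x[\tau]<\infty$ — yields the claim once the common factor $1/\pi_x$ cancels.
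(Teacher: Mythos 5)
Your proof is correct, but there is nothing in the paper to compare it against: the paper imports this statement verbatim from Bremaud's book and explicitly omits the proof ("the proofs are not reproduced here for brevity"), so you have in effect supplied the missing argument. What you give is the standard textbook route: show that the occupation measure $\nu(y)=E_x\bigl[\sum_{t=0}^{\tau-1}I(X_t=y)\bigr]$ satisfies $\nu P=\nu$ via the Markov property applied at each deterministic time $t$ (using that $\{t<\tau\}\in\mathcal{F}_t$), note that the boundary terms $-I(X_0=z)+I(X_\tau=z)$ cancel precisely because the chain starts and stops at the same state $x$, and then pin down the constant by summing over states. The Tonelli justifications and the reduction to $E_x[\tau]<\infty$ are handled appropriately, and the identification $c=E_x[\tau]$ is right. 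The only point worth tightening is that you invoke uniqueness (up to scaling) of the invariant measure for an \emph{irreducible} positive recurrent chain, whereas the lemma as stated only says "positive recurrent"; strictly one should either add irreducibility to the hypotheses (as Bremaud does, and as holds for every chain in this paper) or restrict the argument to the communicating class of $x$. Your fallback via excursion decomposition and Wald's identity is a legitimate alternative but is not needed once the invariance argument goes through.
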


In the next two sections we first present a policy, referred to as the {\em regenerative cycle algorithm}, and then analyze its regret. 
\section{Regenerative Cycle Algorithm (RCA)} \label{sec:policy}

In this section we present an algorithm called the {\em regenerative cycle algorithm} (RCA), and prove in the next section that this algorithm guarantees a logarithmic growth of the regret uniformly over time 
under mild assumptions on the state transition probabilities and the rewards. 

As the name suggests, this algorithm operates on regenerative cycles.  
In essence what the algorithm does is to construct sample paths of each arm solely using those observed within regenerative cycles while discarding the rest in its estimation of the quality of an arm (in the form of an {\em index}).  The reason behind such a construction has to do with the restless nature of the arms.  As noted in the introduction, since each arm continues to evolve according to the Markov chain regardless of the user's action, the probability distribution of the reward we get by playing an arm is a function of the amount of time that has elapsed since the last time we played the same arm.  Since we play one arm at a time, the arms become coupled (in terms of the probability distributions of the rewards).  While this certainly does not affect our ability to collect rewards, it makes it extremely hard to analyze the estimated quality (or the index) of an arm calculated based on rewards collected this way.  

However, if instead of the actual sample path of all observations from an arm, we limit ourselves to a sample path constructed (or rather stitched together) using only the observations from regenerative cycles, then this sample path essentially has the same statistics as the original Markov chain due to the renewal property and one can now use the sample mean of the rewards from the regenerative sample paths to approximate the mean reward under stationary distribution.  



Figure \ref{fig:RCA} illustrates one possible realization of this algorithm.  As shown, RCA operates in blocks.  Within a block, the algorithm plays the same arm in each time slot (arm $i$ in the first block in this example) till a certain pre-specified state (say $\gamma^i$) is observed.  Upon this observation we enter a regenerative cycle and continue to play till the same state $\gamma^i$ is observed a second time.  This marks the end of the block labeled ``play arm $i$''.  At the end of each block, the algorithm computes an index for all arms and selects the one with the highest index to play in the next block (arm $k$ shown in the figure).  It follows that the block length is a random variable.   

For the purpose of index computation and subsequent analysis, each block is further broken into three sub-blocks (SBs).  SB1 consists of all time slots from the beginning of the block to right before the first visit to $\gamma^i$; SB2 includes all time slots from the first visit to $\gamma^i$ up to but excluding the second visit to state $\gamma^i$; SB3 consists of a single time slot with the second visit to $\gamma^i$.  These are also shown in Figure \ref{fig:RCA}.  The key to the algorithm is for each arm to single out only observations within SB2's in each block and virtually assemble them (these are highlighted with thick lines).  Because of the regenerative nature of the Markov chain, once put together, the resulting sample path has exactly the same statistics as given by the transition probability matrix $P^i$; this results in a tractable problem. 

\begin{figure}[t]
\vspace{0.08in}
\includegraphics[width=3.5in]{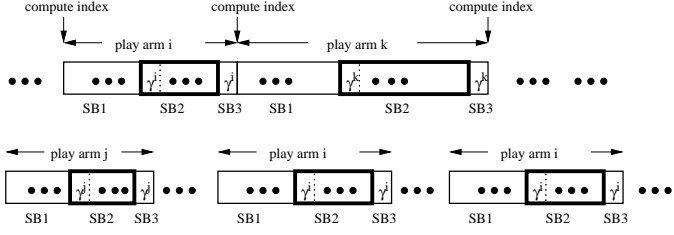}
\caption{Example realization of RCA}
\vspace{-15pt}
\label{fig:RCA}
\end{figure}

Throughout our discussion, we will consider a horizon of $n$ time slots.  A list of notations used is summarized as follows; some are also marked on Figure \ref{fig:RCA-notation} for convenience: 
\begin{figure}[h]
\includegraphics[width=3.5in]{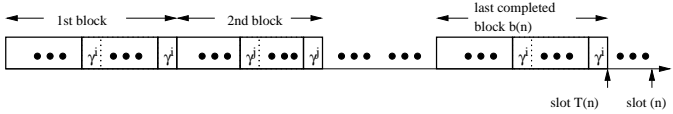}
\vspace{-15pt}
\caption{Running RCA over a period of $n$ slots}
\vspace{-10pt}
\label{fig:RCA-notation}
\end{figure}
\begin{itemize}
\item $\gamma^i$: state that determine the regenerative cycles for arm $i$.
\item $\alpha(b)$: the arm played in block $b$.
\item $b(n)$: total number of completed blocks up to time $n$.
\item $T(n)$: time at the end of the last completed block.
\item $T^i(n)$: total number of times (slots) arm $i$ is played up to time $T(n)$.
\item $B^i(b)$: total number of blocks within the first $b$ blocks in which arm $i$ is played.

\item $X^i_1(j)$: vector of observed states from SB1 of the $j$th block in which arm $i$ is played; 
it is empty if the first observed state is $\gamma^i$.
\item $X^i_2(j)$: vector of observed states from SB2 of the $j$th block in which arm $i$ is played; 
\item $X^i(j)$: vector of observed states from the $j$th block in which arm $i$ is played. Thus we have
$X^i(j)=[X^i_1(j), X^i_2(j), \gamma^i] $. 

\item $t(b)$: time at the end of block $b$; 
$t(b) = \sum_{i=1}^K \sum_{j=1}^{B^i(b)} |X^i(j)| $. 

\item $T^i(t(b))$: total number of time slots arm $i$ is played up to time $t(b)$. Thus 
$T^i(t(b)) =  \sum_{j=1}^{B^i(b)} |X^i(j)|$. 
Also note that $T^i(t(b(n))) = T^i(n)$. 

\item $t_2(b)$: total number of time slots spent in SB2 up to block $b$. Thus 
$t_2(b) = \sum_{i=1}^K \sum_{j=1}^{B^i(b)} |X^i_2(j)| $. 

\item $r^i(k)$: the reward from arm $i$ when it's played for the $k$-th time, counting only those times played during an SB2. 

\item $T^i_2(t_2(b))$: total number of time slots arm $i$ is played during SB2 up to block $b$. Thus 
$T^i_2(t_2(b)) = \sum_{j=1}^{B^i(b)} |X^i_2(j)| $. 
\end{itemize}

RCA computes and updates the value of an {\em index} $g^i$ for each arm $i$ at the end of block $b$, based on the total reward obtained from arm $i$ during all SB2 as follows: 
\begin{figure}[ht]
\vspace{0.08in}
\fbox {
\begin{minipage}{\columnwidth }
{Regenerative Cycle Algorithm (RCA):}\hspace{-30pt}
\begin{algorithmic}[1]
\STATE {Initialize: $b=1, t=0, t _2= 0, T^i_2=0, r^i=0, \forall i=1,\cdots,K$}

\FOR {$b\leq K$}
\STATE{play arm $b$; set $\gamma^b$ to be the first state observed} 
\STATE{$t := t+1$; $t_2 := t_2 + 1$; $T_2^b := T_2^b + 1$; $r^b := r^b + r^b_{\gamma^i}$}
\STATE{play arm $b$; denote observed state as $x$}
\WHILE{$x\neq \gamma^b$}
\STATE{$t := t+1$; $t_2 := t_2 + 1$; $T_2^b := T_2^b + 1$; $r^b := r^b + r^b_{x}$}
\STATE{play arm $b$; denote observed state as $x$}
\ENDWHILE
\STATE{$b:=b+1$; $t:=t+1$}
\ENDFOR 

\FOR{$j=1$ to $K$}
\STATE{compute index $g^j := \frac{r^j}{T^j_2} + \sqrt{\frac{L\ln{t_2}}{T^j_2}}$} 
\STATE{$j++$} 
\ENDFOR

%
\STATE{$i:=\arg\max_{j} g^j$}

\WHILE{(1)}
\STATE{play arm $i$; denote observed state as $x$}
\WHILE{$x\neq \gamma^i$}
\STATE{$t:=t+1$} 
\STATE{play arm $i$; denote observed state as $x$}
\ENDWHILE

\STATE{$t := t+1$; $t_2 := t_2 + 1$; $T_2^i := T_2^i + 1$; $r^i := r^i + r^i_{x}$} 
\STATE{play arm $i$; denote observed state as $x$}
\WHILE{$x\neq \gamma^i$}
\STATE{$t := t+1$; $t_2 := t_2 + 1$; $T_2^i := T_2^i + 1$; $r^i:= r^i + r^i_{x}$} 
\STATE{play arm $i$; denote observed state as $x$}
\ENDWHILE
\STATE{$b:=b+1$; $t:=t+1$}
\FOR{$j=1$ to $K$}
\STATE{compute index $g^j := \frac{r^j}{T^j_2} + \sqrt{\frac{L\ln{t_2}}{T^j_2}}$} 
\STATE{$j++$} 
\ENDFOR
\STATE{$i:=\arg\max_{j} g^j$}

\ENDWHILE

\end{algorithmic}
\end{minipage}
} \caption{Pseudocode of RCA} \label{fig:RCA_algo}
\vspace{-10pt}
\end{figure}
\begin{eqnarray}
g^i_{t_2(b), T^i_2(t_2(b))}= \bar{r}^i(T^i_2(t_2(b))) + \sqrt{\frac{L \ln t_2(b)}{T^i_2(t_2(b))}}, \label{eqn1redifine}
\end{eqnarray} 
where $L$ is a constant, and 
\begin{eqnarray}
\bar{r}^i(T^i_2(t_2(b))=\frac{r^i(1)+r^i(2)+...+r^i(T^i_2(t_2(b)))}{T^i_2(t_2(b))} \nonumber
\end{eqnarray}
denotes the sample mean of the reward collected during an SB2: $X^i_2(1), X^i_2(2), \cdots, X^i_2(B^i(b))$ (this is arm $i$'s total reward over the total number of times it's played).   The second term in the index computation serves the purpose of {\em exploration}: the relative uncertainty of the mean reward of an arm grows as the arm is not played. This index definition is similar to that proposed in \cite{auer}, but computed only over SB2s. 
RCA is formally given in Figure \ref{fig:RCA_algo}.  In this description the algorithm continues indefinitely, but can obviously be stopped at anytime that some desired horizon is reached. 

\comment{
\hspace{0.2in}\begin{framed}
\noindent\textbf{Regenerative Cycle Algorithm (RCA)} \\

\noindent\textbf{Initialization:} $b=1$, $t_2=0$, $T^i_2(t_2)=0, \forall i=1,\cdots,K$\\

for ($b \leq K$) \\
\textbf{Initialization:} Play arm $b$. Set $\gamma^b$ to the first state observed. \\
	\indent $t_2$=$t_2+1$, $T^b_2(t_2)=T^b_2(t_2)+1$.\\
\textbf{Loop:}\\
	\textbf{if} observed state is not $\gamma^b$ keep playing the same arm.\\
	\indent  $t_2$=$t_2+1$, $T^b_2(t_2)=T^b_2(t_2)+1$\\
	\textbf{else if} observed state is $\gamma^b$  \\
	\indent  $b=b+1$\\

while ($b>K$) \\ 
In block $b$ play arm with the highest index according to the indices calculated at the end of block $b-1$.\\
\textbf{Loop:}\\
	\textbf{if} $\gamma^{\alpha(b)}$ is not observed up to current time in block $b$\\
	\indent wait\\
	\textbf{else if} $\gamma^{\alpha(b)}$ is observed once in block $b$ up to current time\\
	\indent $t_2$=$t_2+1$, $T^{\alpha(b)}_2(t_2)=T^{\alpha(b)}_2(t_2)+1$\\
	\textbf{else if} $\gamma^{\alpha(b)}$ is observed for the second time in block $b$\\
	\indent update the indices,$b=b+1$\\
\end{framed}
} 

Its worth noting that RCA also collects reward during SB1 and SB3.  However, the computation of the indices only relies on SB2.  The reason becomes clearer in the next section where we analyze its regret and show that it grows at most logarithmically in $n$. 


\section{Regret analysis of RCA} \label{sec:policy2}

\comment{
First, we state the following result by Lezaud \cite{lezaud}, which bounds the probability of a large deviation from the stationary distribution.
\begin{lemma}\label{lemma3} 
[Theorem 3.3 from \cite{lezaud}] Consider a finite-state, irreducible Markov chain $\left\{X_t\right\}_{t \geq 1}$ with state space $S$, matrix of transition probabilities $P$, an initial distribution $\mathbf{q}$ and stationary distribution $\mathbf{\pi}$. Let $N_{\mathbf{q}}=\left\|(\frac{q_x}{\pi_x}, x\in S)\right\|_2$. Let $\hat{P}=P'P$ be the multiplicative symmetrization of $P$ where $P'$ is the adjoint of $P$ on $l_2(\pi)$. Let $\epsilon=1-\lambda_2$, where $\lambda_2$ is the second largest eigenvalue of the matrix $\hat{P}$. $\epsilon$ will be referred to as the eigenvalue gap of $\hat{P}$. Let $f:S\rightarrow \mathcal{R}$ be such that $\sum_{x \in S} \pi_x f(x) =0$, $\left\|f\right\|_{\infty} \leq 1$ and $0<\left\|f\right\|^2_2 \leq 1$. If $\hat{P}$ is irreducible, then for any positve integer $n$ and all $0 < \gamma \leq 1$
\begin{eqnarray*}
P\left(\frac{\sum_{t=1}^n f(X_t)}{n} \geq \gamma \right) \leq N_q \exp\left[-\frac{n \gamma^2 \epsilon}{28}\right]
\end{eqnarray*}
\begin{proof} 
See Theorem 3.3 of \cite{lezaud}.
\end{proof}
\end{lemma}
} 

We begin by bounding the expected number of plays of a suboptimal arm. 

\begin{theorem}
\label{theorem1}
Assume all arms are finite-state, irreducible, aperiodic Markov chains whose transition probability matrices have irreducible multiplicative symmetrizations \comment{\footnote{Note that instead of the assumption that the multiplicative symmetrizations are irreducible, we can assume that the transition probability matrices are reversible provided that we use a large deviation bound from \cite{gillman} rather than from \cite{lezaud} as we have done in the present paper.}}and assume all rewards are positive. 
Let $\pi^i_{\min} = \min_{x \in S^i} \pi^i_x$, $\pi_{\min} = \min_{1 \leq i \leq K} \pi^i_{\min}$, $r_{\max}=\max_{x \in S^i, 1 \leq i \leq K} r^i_x$, $S_{\max}=\max_{1 \leq i \leq K} |S^i|$, $\hat{\pi}_{\max} = \max_{x \in S^i, 1 \leq i \leq K} \left\{\pi^i_x, 1-\pi^i_x\right\}$, $\epsilon_{\min}= \min_{1\leq i \leq K} \epsilon^i$, $M^i_{\max}=\max_{x,y \in S^i, x \neq y} M^i_{x,y}$, where $\epsilon^i$ is the eigenvalue gap of the multiplicative symmetrization of the transition probability matrix of the $i$th arm and $M^i_{x,y}$ is the mean hitting time of state $y$ starting from an initial state $x$ for the $i$th arm. Then for a player using RCA with a constant $L \geq 112 S^2_{\max} r^2_{\max} \hat{\pi}^2_{\max} /\epsilon_{\min}$ in (\ref{eqn1redifine}), we have 
\begin{eqnarray}
&& \sum_{i:\mu^i < \mu^*} (\mu^*-\mu^i) E[T^i(n)] \nonumber \\
&\leq&  4L\displaystyle\sum_{i: \mu^i < \mu^*} \frac{D_i \ln n}{(\mu^*-\mu^i)} +  \displaystyle\sum_{i: \mu^i < \mu^*} (\mu^*-\mu^i) D_i C_i ~, \nonumber 
\end{eqnarray}
where 
\begin{eqnarray*}
C_i &=& \left(1+\frac{(|S^i|+|S^*|)\beta} {\pi_{\min}}\right), ~~ \beta = \sum_{t=1}^{\infty} t^{-2}\\
D_i &=& \left(\frac{1}{\pi^i_{\min}} + M^i_{\max} +1\right).
\end{eqnarray*} 
\end{theorem}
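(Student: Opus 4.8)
The plan is to follow the standard UCB-style argument (as in Auer et al.\ and its rested-Markovian adaptation in \cite{tekin}), but carried out at the level of \emph{blocks} rather than time slots, and then to translate the bound on the number of suboptimal \emph{blocks} into a bound on the number of suboptimal \emph{time slots} using the regenerative structure. First I would reduce the left-hand side to the expected number of blocks in which a suboptimal arm $i$ is played, call it $E[B^i(b(n))]$: since each block in which arm $i$ is played contributes an SB1 of expected length at most $M^i_{\max}$ (mean hitting time to $\gamma^i$ from an arbitrary state), an SB2 of expected length $1/\pi^i_{\gamma^i} \le 1/\pi^i_{\min}$ (mean return time to $\gamma^i$), and one slot for SB3, Wald's identity (or Lemma~\ref{lastlemma} applied with the block-end stopping time) gives $E[T^i(n)] \le D_i\, E[B^i(b(n))]$ with $D_i = 1/\pi^i_{\min} + M^i_{\max} + 1$. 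So it suffices to show $E[B^i(b(n))] \le 4L \ln n/(\mu^*-\mu^i)^2 + C_i$.

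Next I would bound $E[B^i(b(n))]$ by the usual counting trick. A suboptimal block for arm $i$ is entered only when $g^i \ge g^*$ at the end of the preceding block; writing $T^i_2$ for the SB2-sample count of arm $i$, this forces one of three bad events: (a) the index of the optimal arm underestimates its mean, $\bar r^*(T^*_2) + \sqrt{L\ln t_2 / T^*_2} \le \mu^*$; (b) the sample mean of arm $i$ overestimates its mean by more than the confidence radius, $\bar r^i(T^i_2) \ge \mu^i + \sqrt{L\ln t_2/T^i_2}$; or (c) arm $i$ has been sampled too few times in SB2, $T^i_2 < 4L\ln t_2/(\mu^*-\mu^i)^2$. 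As usual, event (c) can happen at most $\lceil 4L\ln n/(\mu^*-\mu^i)^2\rceil$ times, giving the leading logarithmic term. For (a) and (b) I would sum the probabilities over all possible values of the counts and apply the Lezaud large-deviation bound (Lemma~\ref{lemma3}) to the normalized, centered, bounded reward function $f(\cdot) = (r^i_\cdot - \mu^i)/(S_{\max} r_{\max} \hat\pi_{\max})$ — the normalization by $S_{\max} r_{\max}\hat\pi_{\max}$ is exactly what makes $\|f\|_\infty \le 1$ and $\|f\|_2^2 \le 1$ — with $\gamma = \sqrt{L\ln t_2/(S_{\max}^2 r_{\max}^2\hat\pi_{\max}^2 T^i_2)}$; the choice $L \ge 112 S_{\max}^2 r_{\max}^2 \hat\pi_{\max}^2/\epsilon_{\min}$ makes the exponent $\le -2\ln t_2$, so each probability is at most $N_{\mathbf q} t_2^{-2}$. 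Here I must be careful that Lezaud's bound applies to the \emph{stitched} SB2 sample path, which by the strong Markov / renewal property really is a sample path of the chain $P^i$ started from $\gamma^i$; the initial-distribution factor $N_{\mathbf q}$ with $\mathbf q$ a point mass at $\gamma^i$ is bounded by $1/\sqrt{\pi^i_{\min}} \le 1/\pi_{\min}$. Summing the resulting $t_2^{-2}$ terms over the (at most $|S^i|+|S^*|$ relevant counts times) horizon yields the $\beta = \sum_t t^{-2}$ factor and the constant $C_i = 1 + (|S^i|+|S^*|)\beta/\pi_{\min}$.

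The main obstacle, and the place where this proof genuinely differs from the rested case, is the bookkeeping connecting $t_2(b)$ (the SB2-clock, which drives the index) with the real time $n$ and with the per-arm counts: one needs $t_2(b(n)) \le n$ so that $\ln t_2 \le \ln n$ in the leading term, and one needs the sum over "values of the counts" in steps (a)–(b) to be genuinely finite and to produce the stated $\beta$-constant rather than something horizon-dependent. Making the block/slot/SB2 indexing rigorous — in particular verifying that discarding SB1 and SB3 does not bias the stitched sample path and that the stopping times defining block boundaries are a.s.\ finite (which is where aperiodicity, irreducibility, and finiteness of $S^i$ enter) — is the delicate part; the large-deviation estimate itself is then a routine application of Lemma~\ref{lemma3} once the normalization constants are pinned down. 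I would finish by multiplying through by $(\mu^*-\mu^i)$ and summing over suboptimal $i$, which gives exactly the two stated terms.
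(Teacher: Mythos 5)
Your proposal follows the same skeleton as the paper's proof: the block-level UCB counting argument with the three bad events, the choice $l=\lceil 4L\ln t_2(b)/(\mu^*-\mu^i)^2\rceil$ to kill the small-sample event, Lezaud's bound (Lemma~\ref{lemma3}) applied to the stitched SB2 sample path (justified by the strong Markov property), and the final translation $E[T^i(n)]\leq D_i E[B^i(b(n))]$ via the decomposition of a block into SB1 (expected length at most $M^i_{\max}$), SB2 (expected length $1/\pi^i_{\gamma^i}\leq 1/\pi^i_{\min}$ by Lemma~\ref{lastlemma}), and SB3. One step differs: you apply Lemma~\ref{lemma3} once, directly to the centered reward function $f(x)=(r^i_x-\mu^i)/(S_{\max}r_{\max}\hat{\pi}_{\max})$, whereas the paper first union-bounds the sample-mean deviation over the states $y\in S^i$ and applies the lemma separately to each occupation-count indicator $f(X_t)=(I(X_t=y)-\pi^i_y)/\hat{\pi}^i_y$; your route is legitimate (the normalization does give $\|f\|_\infty\leq 1$ since $S_{\max}\hat{\pi}_{\max}\geq 1$) and would in fact yield a constant smaller than the stated $C_i$, so it still proves the theorem --- though it makes your attribution of the $(|S^i|+|S^*|)$ factor to ``relevant counts'' somewhat incoherent, since in your version that factor simply does not arise.

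There is one bookkeeping slip you should fix. You claim the condition on $L$ makes each deviation probability at most $N_{\mathbf{q}}t_2^{-2}$ and then sum only over the horizon to get $\beta$. But the counting trick you invoke requires a union over all possible values of the two sample counts $s$ and $s_i$, i.e., a triple sum $\sum_t\sum_{s=1}^{t-1}\sum_{s_i=l}^{t-1}$, which contributes an extra factor of order $t^2$; with a $t^{-2}$ tail that sum diverges. What saves the argument is that $L\geq 112S^2_{\max}r^2_{\max}\hat{\pi}^2_{\max}/\epsilon_{\min}$ actually forces the Lezaud exponent to be at most $-4\ln t$ (since $112/28=4$), so each probability is $O(t^{-4})$, the double inner sum brings this back up to $t^{-2}$, and only then does $\sum_t t^{-2}=\beta$ give the stated $C_i$. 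Your final answer is right, but only because an understated exponent and an omitted double sum happen to cancel; as written, the chain of inequalities would not close.
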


\begin{proof}
Throughout the proof all quantities pertain to RCA, which will be denoted by $\alpha$ and suppressed from the superscript whenever there is no ambiguity. 
%
%
Let $c_{t,s}=\sqrt{L\ln t/s}$, and let $l$ be any positive integer. Then,
\begin{eqnarray}
&& B^i(b) = 1+\displaystyle\sum_{m=K+1}^{b} I(\alpha(m)=i) \nonumber \\
& \leq&  l+\displaystyle\sum_{m=K+1}^{b} I(\alpha(m)=i, B^i(m-1) \geq l)  \nonumber \\
& \leq&  l+\displaystyle\sum_{m=K+1}^{b} I(g^*_{t_2(m-1), T^*_2(t_2(m-1))} \nonumber \\
&& ~~~~~~~~~~~~~~~\leq g^i_{t_2(m-1), T^i_2(t_2(m-1))}, B^i(m-1) \geq l) \nonumber \\
&\leq&  l+\sum_{m=K+1}^{b} I\left(\min_{1 \leq s \leq t_2(m-1)} g^*_{t_2(m-1), s} \right. \nonumber \\
&& ~~~~~~~~~~~~~~~~\leq \left. \max_{t_2(l) \leq s_i \leq t_2(m-1)} g^i_{t_2(m-1), s_i} \right) \nonumber
\end{eqnarray}
\\
\begin{eqnarray}
&\leq&  l+ \displaystyle\sum_{m=K+1}^{b}\hspace{-5pt}\sum_{s=1}^{t_2(m-1)}\sum_{s_i=t_2(l)}^{t_2(m-1)} I(g^*_{t_2(m),s}\leq g^i_{t_2(m), s_i})  \label{jul21} \\
&\leq& l + \sum_{t=1}^{t_2(b)} \sum_{s=1}^{t-1} \sum_{s_i=l}^{t-1} I(g^*_{t, s} \leq g^i_{t, s_i}) \label{eqn1} 
\end{eqnarray} 
where as given in (\ref{eqn1redifine}), 
$g^i_{t,s}=\bar{r}^i(s)+c_{t,s}$.
The inequality in (\ref{eqn1}) follows from the fact that the outer sum in (\ref{eqn1}) is over time while the outer sum in (\ref{jul21}) is over blocks and each block lasts at least two time slots.

We now show that $g^*_{t, s} \leq g^i_{t, s_i}$ implies that at least one of the following holds: 
\begin{eqnarray}
\bar{r}^*(s) &\leq& \mu^* - c_{t,s} \label{eqn2}\\ 
\bar{r}^i(s_i) &\geq& \mu^i+c_{t,s_i} \label{eqn3}\\
\mu^* &<& \mu^i+2c_{t,s_i} \label{eqn4}.
\end{eqnarray}
This is because if none of the above holds, then we must have 
\begin{eqnarray*}
g^*_{t, s} = \bar{r}^*(s)+c_{t,s} > \mu^* \geq \mu^i+2c_{t,si} > \bar{r}^i(s_i)+c_{t,s_i} = g^i_{t, s_i}, 
\end{eqnarray*}
which contradicts $g^*_{t, s} \leq g^i_{t, s_i}$. 

If we choose $s_i \geq 4L\ln(t_2(b))/(\mu^*-\mu^i)^2$, then
%
$2c_{t,s_i}\leq \mu^*- \mu^i$ for $t \leq t_2(b)$, 
which means (\ref{eqn4}) is false, and therefore at least one of (\ref{eqn2}) and (\ref{eqn3}) is true with this choice of $s_i$.  
%
We next take $l=\left\lceil \frac{4L\ln t_2(b)}{(\mu^*-\mu^i)^2}\right\rceil$, and proceed from (\ref{eqn1}).  Taking expectation on both sides and relaxing the outer sum in (\ref{eqn1}) from $t_2(b)$ to $\infty$,
\begin{eqnarray*}
E[B^i(b)] &\leq& \left\lceil \frac{4L\ln t_2(b)}{(\mu^*-\mu^i)^2}\right\rceil \\
&+& \displaystyle\sum_{t=1}^{\infty}\sum_{s=1}^{t-1}\sum_{s_i=\left\lceil \frac{4L\ln t_2(b)}{(\mu^*-\mu^i)^2}\right\rceil}^{t-1} P(\bar{r}^*(s)\leq\mu^*-c_{t,s}) \\
&+& \displaystyle\sum_{t=1}^{\infty}\sum_{s=1}^{t-1}\sum_{s_i=\left\lceil \frac{4L\ln t_2(b)}{(\mu^*-\mu^i)^2}\right\rceil}^{t-1} P(\bar{r}^i(s_i) \geq \mu^i+c_{t,s_i}). 
\end{eqnarray*}

Consider an initial distribution ${\bf q}^i$ for the $i$th arm.  We have: 
\begin{eqnarray*}
&& N_{\mathbf{q}^i}=\left\|\left(\frac{q_{y}^i}{\pi_{y}^i},y\in S^i\right)\right\|_2 \leq \sum_{y \in S^i} \left\|\frac{q_{y}^i}{\pi_{y}^i}\right\|_2 \leq \frac{1}{\pi_{\min}} , 
\end{eqnarray*}
where the first inequality follows from Minkowski inequality.
Let $n^i_y(t)$ denote the number of times state $y$ of arm $i$ is observed during all SB2s up to the $t$th play. Then,
\begin{eqnarray} 
&& P(\bar{r}^i(s_i) \geq \mu^i+c_{t,s_i}) \nonumber \\
&=& P\left( \sum _{y \in S^i} r^i_y n^i_y(s_i) \geq s_i  \sum _{y \in S^i} r^i_y \pi^i_y+s_i c_{t,s_i} \right) \nonumber \\
&=& P\left( \sum_{y \in S^i} (r^i_y n^i_y(s_i) -r^i_y s_i \pi^i_y ) \geq s_i c_{t,s_i} \right) \nonumber 
\end{eqnarray}
\begin{eqnarray}
\vspace{0.02in}
&=& P( \sum_{y \in S^i} (-r^i_y n^i_y(s_i) + r^i_y s_i \pi^i_y ) \leq - s_i c_{t,s_i} ) ~. \label{correction1}
\end{eqnarray}
Consider a sample path $\omega$ and the events
\begin{eqnarray*}
A &=&\left\{\omega : \sum_{y \in S^i} (-r^i_y n^i_y(s_i)(\omega) + r^i_y s_i \pi^i_y ) \leq - s_i c_{t,s_i} \right\} ~,\\
B&=& \bigcup_{y \in S^i} \left\{\omega : -r^i_y n^i_y(s_i)(\omega) + r^i_y s_i \pi^i_y \leq - \frac{s_i c_{t,s_i}}{|S^i|}\right\} ~. 
\end{eqnarray*}
If $\omega \notin B$ then,
\begin{eqnarray*}
&& -r^i_y n^i_y(s_i)(\omega) + r^i_y s_i \pi^i_y > - \frac{s_i c_{t,s_i}}{|S^i|}, \ \forall y \in S^i \\
\Rightarrow && \sum_{y \in S^i} (-r^i_y n^i_y(s_i)(\omega) + r^i_y s_i \pi^i_y ) > - s_i c_{t,s_i}
\end{eqnarray*}
Thus $\omega \notin A$, and $P(A) \leq P(B)$. Then continuing from (\ref{correction1}):  
\begin{eqnarray}
&& P(\bar{r}^i(s_i) \geq \mu^i+c_{t,s_i}) \nonumber \\
&\leq& \sum_{y \in S^i} P\left( -r^i_y n^i_y(s_i) + r^i_y s_i \pi^i_y  \leq - \frac{s_ic_{t,s_i}}{|S^i|} \right) \nonumber \\
&=& \sum_{y \in S^i} P\left( r^i_y n^i_y(s_i) - r^i_y s_i \pi^i_y  \geq \frac{s_ic_{t,s_i}}{|S^i|} \right) \nonumber \\
&\leq& \sum_{y \in S^i} N_{q^i} t^{-\frac{L \epsilon^i}{28 (|S^i| r^i_y \hat{\pi}^i_y)^2}}  \label{n1eqn} \\
&\leq& \frac{|S^i|}{\pi_{\min}} t^{-\frac{L \epsilon_{\min}}{28 S^2_{\max} r^2_{\max} \hat{\pi}^2_{\max}}} \label{eqn8},
\end{eqnarray}
where (\ref{n1eqn}) follows from letting
\begin{eqnarray*}
\gamma &=& \frac{c_{t,s_i}}{|S^i| r^i_y \hat{\pi}^i_y},  ~~ \hat{\pi}^i_y=\max \left\{\pi^i_y, 1-\pi^i_y\right\}   \\
f(X^i_t) &=& \frac{I(X^i_t =y)-\pi^i_y}{\hat{\pi}^i_y},
\end{eqnarray*}
and using Lemma \ref{lemma3} (note $\hat{P}^i$ is irreducible), which gives 
\begin{eqnarray}
&& P\left( n^i_y(s_i) -s_i \pi^i_y  \geq \frac{s_ic_{t,s_i}}{|S^i| r^i_y} \right) \nonumber \\
&=& P\left(\frac{\sum_{t=1}^{s_i} I(X^i_t=y) -s_i \pi^i_y}{\hat{\pi}^i_y s_i} \geq \frac{c_{t,s_i}}{|S^i| r^i_y \hat{\pi}^i_y}\right) \nonumber \\ 
&\leq& N_{q^i} t^{-\frac{L \epsilon^i}{28 (|S^i| r^i_y \hat{\pi}^i_y)^2}} 
\end{eqnarray}
We note that for $\gamma >1$ the deviation probabiltiy is zero so the bound still holds.

Similarly, we have 
\begin{eqnarray} 
&& P(\bar{r}^*(s)\leq\mu^*-c_{t,s}) \nonumber \\
&= & P(\displaystyle \sum_{y \in S^*} r^*_y (n^*_y(s)-s \pi^*_y) \leq -sc_{t,s}) \nonumber \\
&\leq& \sum_{y \in S^*} P(r^*_y n^*_y(s)-r^*_y s \pi^*_y \leq -\frac{sc_{t,s}}{|S^*|} ) \nonumber \\
&=& \hspace{-4pt}\sum_{y \in S^*}\hspace{-4pt} P(r^*_y(s-\displaystyle\sum_{x \neq y} n^*_x(s)) - r^*_ys(1- \displaystyle\sum_{x \neq y} \pi^*_x) \leq -\frac{sc_{t,s}}{|S^*|}) \nonumber 
\end{eqnarray}
\begin{eqnarray}
\vspace{0.04in}
&=& \sum_{y \in S^*} P(r^*_y \displaystyle\sum_{x \neq y} n^*_x(s)- r^*_y s\displaystyle\sum_{x \neq y} \pi^*_x \geq \frac{sc_{t,s}}{|S^*|})  \nonumber \\
&\leq& \sum_{y \in S^*} N_{q^*} t^{-\frac{L \epsilon^*}{28 (|S^*|r^*_y \hat{\pi}^*_y)^2}} \label{eqn6} \\
&\leq& \frac{|S^*|}{\pi_{\min}} t^{-\frac{L \epsilon_{\min}}{28 S^2_{\max} r^2_{\max} \hat{\pi}^2_{\max}}} \label{eqn7}  
\end{eqnarray}
where (\ref{eqn6}) again follows from Lemma \ref{lemma3}.
Since
\begin{eqnarray}
&& \frac{|S^i|+|S^*|}{\pi_{\min}} \sum_{t=1}^{\infty}\sum_{s=1}^{t-1}\sum_{s_i=1}^{t-1} t^{-\frac{L \epsilon_{\min}}{28 S^2_{\max} r^2_{\max} \hat{\pi}^2_{\max}}} \nonumber \\
&=&\frac{|S^i|+|S^*|}{\pi_{\min}} \sum_{t=1}^{\infty} t^{-\frac{L \epsilon_{\min} - 56 S^2_{\max} r^2_{\max} \hat{\pi}^2_{\max}}{28 S^2_{\max} r^2_{\max} \hat{\pi}^2_{\max}}} \nonumber \\
&\leq& \frac{|S^i|+|S^*|}{\pi_{\min}} \sum_{t=1}^{\infty} t^{-2}, \label{newlabel1}
\end{eqnarray}
from (\ref{eqn8}) and (\ref{eqn7}), given $b(n)=b$ we have  
\begin{eqnarray}
E[B^i(b(n))|b(n)=b] \leq \left\lceil \frac{4L\ln t_2(b)}{(\mu^*-\mu^i)^2}\right\rceil + \frac{(|S^i|+|S^*|)\beta}{\pi_{\min}},\nonumber
\end{eqnarray}
for all suboptimal arms.  The inequality in (\ref{newlabel1}) follows from the assumption $L \geq 112 S^2_{\max} r^2_{\max} \hat{\pi}^2_{\max} /\epsilon_{\min}$.
Therefore,
\begin{eqnarray}
E[B^i(b(n))] \leq \frac{4L\ln n}{(\mu^*-\mu^i)^2} +1+ \frac{(|S^i|+|S^*|)\beta}{\pi_{\min}}, \label{eqn9}
\end{eqnarray}
since $n \geq t_2(b(n))$ almost surely.

Note that all the quantities in computing the indices and the probabilities in above comes from the intervals $X^i_2(1), X^i_2(2), \cdots \forall i \in \left\{1,\cdots,K\right\}$. Since these intervals begin with state $\gamma^i$ and end with a return to $\gamma^i$, by the strong Markov property the process at these stopping times have the same distribution as the original process. Moreover by connecting these intervals together we form a continuous sample path which can be viewed as a sample path generated by a Markov chain with an transition matrix identical to the original arm. This is the reason why we can apply Lezaud's bound to this Markov chain.

The total number of plays of arm $i$ at the end of block $b(n)$ is equal to the total number of plays of arm $i$ during the regenerative cycles of visiting state $\gamma^i$ plus the total number of plays before entering the regenerative cycles plus one more play resulting from the last play of the block which is state $\gamma^i$.  This gives: 
\begin{eqnarray}
E[T^i(n)] \leq \left(\frac{1}{\pi^i_{\min}} + M^i_{\max} +1\right)E[B^i(b(n))] ~. \nonumber
\end{eqnarray}
Thus,
\begin{eqnarray}
&&\displaystyle\sum_{i:\mu^i < \mu^*} (\mu^*-\mu^i) E[T^i(n)] \nonumber \\
&\leq& 4L\displaystyle\sum_{i: \mu^i < \mu^*} \frac{D_i \ln n}{( \mu^*-\mu^i)} + \displaystyle\sum_{i:\mu^i<\mu^*} (\mu^*-\mu^i) C_iD_i . \label{eqn10}
\end{eqnarray}
\end{proof}
%

\comment{
The following Lemma, which can be found in \cite{bremaud}, will be used in the proof of the next theorem.
\begin{lemma}
If $\left\{X_n\right\}_{n \geq 0}$ is a positive recurrent homogeneous Markov chain with state space $S$, stationary distribution $\pi$ and $\tau$ is a stopping time that is finite almost surely for which $X_{\tau}=x$ then for all $y \in S$
\begin{eqnarray*}
E\left[ \sum_{t=0}^{\tau-1} I(X_t=y) | X_0=x \right] = E[\tau | X_0=x]\pi_y
\end{eqnarray*}
\label{lastlemma}
\end{lemma}
} 

We now state the main theorem of this paper.
\begin{theorem} \label{theorem2} 
Assume all arms are finite-state, irreducible, aperiodic Markov chains whose transition probability matrices have irreducible multiplicative symmetrizations and assume all rewards are positive. 
Let $\pi^i_{\min} = \min_{x \in S^i} \pi^i_x$, $\pi_{\min} = \min_{1 \leq i \leq K} \pi^i_{\min}$, $r_{\max}=\max_{x \in S^i, 1 \leq i \leq K} r^i_x$, $S_{\max} = \max_{1 \leq i \leq K} |S^i|$, $\hat{\pi}_{\max} = \max_{x \in S^i, 1 \leq i \leq K} \left\{\pi^i_x, 1-\pi^i_x\right\}$, $\epsilon_{\min}= \min_{1\leq i \leq K} \epsilon^i$, $M^i_{\max}=\max_{x,y \in S^i, x \neq y} M^i_{x,y}$, where $\epsilon^i$ is the eigenvalue gap of the multiplicative symmetrization of the transition probability matrix of the $i$th arm and $M^i_{x,y}$ is the mean hitting time of state $y$ starting from an initial state $x$ for the $i$th arm. Then using a constant $L \geq 112 S^2_{\max} r^2_{\max} \hat{\pi}^2_{\max} /\epsilon_{\min}$, the regret of RCA can be upper bounded uniformly over time by the following, $\forall n$: 
\begin{eqnarray*}
R^{RCA}(n) &<& 4L \ln n \sum_{i: \mu^i<\mu^*} \frac{1}{\mu^*-\mu^i}\left(D_i + \frac{E_i}{\mu^*-\mu^i}\right) \\
&+& \sum_{i: \mu^i<\mu^*}C_i\left((\mu^*-\mu^i)D_i+E_i\right) +F
\end{eqnarray*}
where 
\begin{eqnarray*}
C_i &=& \left(1+\frac{(|S^i|+|S^*|)\beta} {\pi_{\min}}\right), ~~ \beta = \sum_{t=1}^{\infty} t^{-2}\\
D_i &=& \left(\frac{1}{\pi^i_{\min}} + M^i_{\max} +1\right),\\
E_i &=& \mu^i(1+M^i_{\max}) + \mu^*M^*_{\max},\\
F &=& \mu^*\left(\frac{1}{\pi_{\min}} + \max_{i \in \left\{1,...,K\right\}} M^i_{\max} +1\right).
\end{eqnarray*} 
\end{theorem}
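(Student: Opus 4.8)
This is essentially a bookkeeping argument on top of Theorem~\ref{theorem1}: the statistical work (Lezaud's inequality) is already done there, and what remains is to convert the bound on the number of suboptimal plays into a bound on the regret while accounting for the reward collected inside the boundary sub-blocks SB1 and SB3 and inside the final incomplete block. Write $W_i$ for the reward collected from arm $i$ during its completed blocks and $W_0$ for the reward collected during the final incomplete block, and recall $T(n)=\sum_i T^i(n)$. Starting from $R^{RCA}(n)=n\mu^*-E[\sum_i W_i]-E[W_0]$, add and subtract $\sum_i\mu^i E[T^i(n)]$ and use $n=T(n)+(n-T(n))$; since the $i=*$ term drops out of the ``$(\mu^*-\mu^i)$'' part this gives
\begin{align*}
R^{RCA}(n) &= \sum_{i:\mu^i<\mu^*}(\mu^*-\mu^i)E[T^i(n)]
+ \sum_{i}\bigl(\mu^i E[T^i(n)]-E[W_i]\bigr)\\
&\quad + \bigl(\mu^* E[n-T(n)]-E[W_0]\bigr).
\end{align*}
The first sum is exactly the quantity bounded in Theorem~\ref{theorem1}, which produces the $D_i$-terms. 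The last bracket is at most $\mu^*E[n-T(n)]$, and $n-T(n)$ is the length of one block, whose expectation is at most $\tfrac1{\pi_{\min}}+\max_i M^i_{\max}+1$ (an SB1 of mean length $\le M^i_{\max}$, an SB2 of mean length equal to the mean return time $1/\pi^i_{\gamma^i}\le 1/\pi_{\min}$, and one SB3 slot); hence this bracket is at most $F$.

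The real work is the middle sum. Fix an arm $i$. By the strong Markov property each SB2 is an i.i.d.\ regenerative excursion from $\gamma^i$ back to $\gamma^i$, and Lemma~\ref{lastlemma} applied to one such excursion shows its expected reward equals $\mu^i$ times its expected length; a Wald-type identity over the (stopping-time) number $B^i(b(n))$ of arm-$i$ blocks then gives $E[\text{total SB2 reward of }i]=\mu^i E[T^i_2(n)]$. Therefore $\mu^i E[T^i(n)]-E[W_i]=\mu^i\,E[T^i(n)-T^i_2(n)]-E[\text{SB1 reward of }i]-r^i_{\gamma^i}E[B^i(b(n))]$; since $T^i(n)-T^i_2(n)$ is the total SB1$+$SB3 time, each SB1 has conditional mean length $\le M^i_{\max}$, each block contributes one SB3 slot, and the SB1 reward and $r^i_{\gamma^i}$ are nonnegative, for every suboptimal arm this is at most $\mu^i(M^i_{\max}+1)E[B^i(b(n))]$.

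The delicate case --- which I expect to be the main obstacle --- is $i=*$, where $E[B^*(b(n))]=\Theta(n)$, so the per-block bound $\mu^*(M^*_{\max}+1)E[B^*(b(n))]$ is worthless. The remedy is to group the optimal-arm blocks into maximal \emph{runs} of consecutive optimal-arm blocks. Within a run the algorithm never switches, so the optimal arm's observed path over the entire run is one uninterrupted Markov path that starts from the state $s$ in which the optimal arm is re-encountered and ends at $\gamma^*$. Decompose this run-path into the initial SB1 (from $s$ to the first $\gamma^*$: mean length $\le M^*_{\max}$, reward $\ge 0$) and the remainder, which is a $\gamma^*\!\to\!\gamma^*$ path equal to a concatenation of a stopping-time number of regenerative excursions plus one leftover $\gamma^*$ slot; on the excursions Lemma~\ref{lastlemma} and Wald give expected reward $=\mu^*$ times expected length, and the leftover slot costs only $\mu^*-r^*_{\gamma^*}\le\mu^*$. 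Hence $\mu^* E[|\rho|]-E[\text{reward over run }\rho]\le\mu^*(M^*_{\max}+1)$. Since every run except possibly the first is preceded by a block of some suboptimal arm, the number of runs is at most $1+\sum_{i\ne*}E[B^i(b(n))]$, so $\mu^* E[T^*(n)]-E[W_*]\le\mu^*(M^*_{\max}+1)\bigl(1+\sum_{i\ne*}E[B^i(b(n))]\bigr)$ --- this is where the $\mu^*M^*_{\max}$ piece of each $E_i$ comes from.

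Finally, I would assemble the three pieces, substitute the Theorem~\ref{theorem1} bound into the first sum and the inequality $E[B^i(b(n))]\le\frac{4L\ln n}{(\mu^*-\mu^i)^2}+C_i$ (established in the proof of Theorem~\ref{theorem1}) into the per-block terms, and group the coefficients of $\ln n$ and of $C_i$ to produce $D_i$, $E_i$, $C_i$ and to absorb the additive constants (the extra run, the $K$-block initialization) into $F$. Apart from the optimal-arm argument, the points needing care are the measurability/stopping-time justification of each Wald step --- the event ``arm $i$ gets another block'' is determined by the past SB2 excursions of all arms, which are mutually independent, and never by the excursion being added --- and the mild inspection-paradox issue in bounding $E[n-T(n)]$.
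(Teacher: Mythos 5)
Your proposal is correct and follows essentially the same route as the paper's proof: the same three-part decomposition of the regret, the same use of Theorem~\ref{theorem1} together with Lemma~\ref{lastlemma} on the regenerative SB2 paths, the same grouping of consecutive optimal-arm blocks into runs whose number is bounded by the number of suboptimal blocks, and the same bound $F$ on the final incomplete block. The only (harmless) difference is that your explicit accounting of the leftover $\gamma^*$ slot yields $\mu^*(M^*_{\max}+1)$ per run where the paper keeps only $\mu^* M^*_{\max}$, a slightly larger but equally logarithmic constant.
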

\begin{proof}
Assume that the states which determine the regenerative sample paths are given {\em a priori} by $\gamma=[\gamma^1,\cdots,\gamma^K]$. We denote the expectations with respect to RCA given $\gamma$ as $E_\gamma$. First we rewrite the regret in the following form:
\begin{eqnarray}
R_\gamma(n)&=&\mu^* E_\gamma[T(n)] - E_\gamma[\sum_{t=1}^{T(n)} r^{\alpha(t)}_{x_{\alpha(t)}}] \nonumber \\
&&+ \mu^*E_\gamma[n-T(n)]-E_\gamma[\sum_{t=T(n)+1}^{n} r^{\alpha(t)}_{x_{\alpha(t)}}] \nonumber \\
&=& \left\{ \mu^* E_\gamma[T(n)] - \sum_{i=1}^K \mu^i E_\gamma \left[T^i(n)\right]  \right\} - Z_{\gamma}(n) \nonumber
\end{eqnarray}
\begin{eqnarray}
&& + \mu^*E_\gamma[n-T(n)]-E_\gamma[\sum_{t=T(n)+1}^{n} r^{\alpha(t)}_{x_{\alpha(t)}}] ~.  \label{eqn:last_R}
\end{eqnarray}
where for notational convenience, we have used
\begin{eqnarray}
Z_\gamma(n)=E_\gamma \left[\sum_{t=1}^{T(n)} r^{\alpha(t)}_{x_{\alpha(t)}} \right]  - \sum_{i=1}^K \mu^i E_\gamma \left[T^i(n)\right]. \nonumber
\end{eqnarray}
We can bound the first difference in (\ref{eqn:last_R}) logarithmically using Theorem \ref{theorem1}, so it remains to bound $Z_\gamma(n)$ and the last difference. We have
\begin{eqnarray}
&& Z_\gamma(n) \geq \sum_{y \in S^*}r^*_y E_\gamma\left[\sum_{j=1}^{B^*(b(n))} \sum_{X^*_t \in X^*(j)} I(X^*_t=y)\right] \nonumber \\ 
&+& \sum_{i: \mu^i < \mu^*} \sum_{y \in S^i} r^i_y E_\gamma \left[ \sum_{j=1}^{B^i(b(n))} \sum_{X^i_t \in X^i_2(j)} I(X^i_t=y)\right] \label{modif1} \\
&-& \mu^* E_\gamma \left[T^*(n)\right] \nonumber \\
&-& \sum_{i: \mu^i < \mu^*} \mu^i \left(\frac{1}{\pi^i_{\gamma^i}}+M^i_{\max}+1\right) E_\gamma \left[ B^i(b(n)) \right] ~, \nonumber
\end{eqnarray}
where the inequality comes from counting only the rewards obtained during the SB2s for all suboptimal arms. 
Applying Lemma \ref{lastlemma} to (\ref{modif1}) we get 
\begin{eqnarray}
E_\gamma \left[ \sum_{j=1}^{B^i(b(n))} \sum_{X^i_t \in X^i_2(j)} I(X^i_t=y)\right] = \frac{\pi^i_y}{\pi^i_{\gamma^i}} E_\gamma \left[B^i(b(n))\right] ~.  \nonumber
\end{eqnarray}
Rearrange terms and noting $\mu^* = \sum_{y} r^*_y \pi^*_y$, 
\begin{eqnarray}
Z_\gamma(n) \geq R^*(n) - \sum_{i: \mu^i < \mu^*} \mu^i (M^i_{\max}+1) E_\gamma \left[ B^i(b(n)) \right] \label{modif3}
\end{eqnarray}
where
\begin{eqnarray}
R^*(n)&=& \sum_{y \in S^*} r^*_y E_\gamma \left[ \sum_{j=1}^{B^*(b(n))} \sum_{X^*_t \in X^*(j)}  I(X^*_t=y)\right] \nonumber  \\
&-& \sum_{y \in S^*} r^*_y \pi^*_y  E_\gamma \left[T^*(n)\right] \nonumber.
\end{eqnarray}
Consider now $R^*(n)$. Since all suboptimal arms are played at most logarithmically, the number of time steps in which the best arm is not played is at most logarithmic.  It follows that the number of discontinuities between plays of the best arm is at most logarithmic. 
Suppose we combine successive blocks in which the best arm is played, and denote by $\bar{X}^*(j)$ the $j$-th combined block. 
Let $\bar{b}^*$ denote the total number of combined blocks up to block $b$. Each $\bar{X}^*$ thus consists of two sub-blocks: $\bar{X}^*_1$ that contains the states visited from beginning of $\bar{X}^*$ (empty if the first state is $\gamma^*$) to the state right before hitting $\gamma^*$, and sub-block $\bar{X}^*_2$ that contains the rest of $\bar{X}^*$ (a random number of regenerative cycles).  

\comment{
\begin{figure}
\includegraphics[width=3.5in]{figureb.eps}
\caption{$\bar{X}^*, \bar{X}^*_1, \bar{X}^*_2$ for a sample path}
\label{figure5}
\end{figure}
} 

Since a block $\bar{X}^*$ starts after discontinuity in playing the best arm, $\bar{b}^*(n)$ is less than or equal to total number of completed blocks in which the best arm is not played up to time $n$.  Thus 
\begin{eqnarray}
E_\gamma[\bar{b}^*(n)] \leq \sum_{i: \mu^i < \mu^*} E_\gamma[B^i(b(n))]. \label{modif4}
\end{eqnarray}
We rewrite  $R^*(n)$ in the following from:
\begin{eqnarray}
R^*(n)&=& \sum_{y \in S^*} r^*_y E_\gamma \left[ \sum_{j=1}^{\bar{b}^*(n)} \sum_{X^*_t \in \bar{X}^*_2(j)}  I(X^*_t=y)\right] \label{difference6} \\
&-& \sum_{y \in S^*} r^*_y \pi^*_y  E_\gamma \left[ \sum_{j=1}^{\bar{b}^*(n)} |\bar{X}^*_2(j)| \right] \label{difference7} 
\end{eqnarray}
\\
\begin{eqnarray}
&+& \sum_{y \in S^*} r^*_y E_\gamma \left[ \sum_{j=1}^{\bar{b}^*(n)} \sum_{X^*_t \in \bar{X}^*_1(j)}  I(X^*_t=y)\right] \label{modif5} \\
&-& \sum_{y \in S^*} r^*_y \pi^*_y  E_\gamma \left[ \sum_{j=1}^{\bar{b}^*(n)} |\bar{X}^*_1(j)| \right] \label{modif6} \\
&>& 0 - \mu^* M^*_{\max} \sum_{i: \mu^i < \mu^*} E_\gamma[B^i(b(n))] \label{difference8}
\end{eqnarray}
where the last inequality is obtained by noting the difference between (\ref{difference6}) and (\ref{difference7}) is zero by Lemma \ref{lastlemma}, using positivity of rewards to lower bound (\ref{modif5}) by $0$, and (\ref{modif4}) to upper bound (\ref{modif6}). 
Combine this with (\ref{eqn9}) and (\ref{modif3}) we can thus obtain a logarithmic upper bound on $-Z_\gamma(n)$.   
Finally, we have 
\begin{eqnarray}
&& \mu^* E_\gamma[n-T(n)]-E_\gamma[\sum_{t=T(n)+1}^{n} r^{\alpha(t)}_{x_{\alpha(t)}}] \nonumber \\
&\leq& \mu^*\left(\frac{1}{\pi_{\min}} + \max_{i \in \left\{1,...,K\right\}} M^i_{\max} +1\right) ~. \label{last_eq}
\end{eqnarray}
Therefore we have obtained the stated logarithmic bound for (\ref{eqn:last_R}).  Note that this bound does not depend on $\gamma$, and therefore is also an upper bound for $R(n)$, completing the proof. 
\end{proof}

Therefore, given minimal information about the arms such as an upper bound for $ S^2_{\max} r^2_{\max} \hat{\pi}^2_{\max} /\epsilon_{\min}$ the player can guarantee logarithmic regret by choosing an $L$ in the RCA algorithm that satisfies the condition in Theorem \ref{theorem2}. 

We end this section by noting that the logarithmic bound in $n$ is also {\em order optimal} for this restless bandit problem, i.e., no better bound than $\ln n$ is possible (however a better constant is possible). This follows from the fact that the {\em rested} bandit problem is a special case of the {\em restless} problem and in \cite{anantharam1} it is shown that the best order is logarithmic for the {\em rested} problem. Moreover, we conjecture that the order optimality of RCA holds when it is used with any index policy that is order optimal for the rested bandit problem. Because of the use of regenerative cycles in RCA, the observations used to calculate the indices can be in effect treated as coming from rested arms. Thus an approach similar to the one in Theorem \ref{theorem1} can be used to prove order optimality.

\section{An Example: Gilbert-Elliot Channel Model} \label{sec:example}


In this section we simulate RCA and two other algorithms under the commonly used Gilbert-Elliot channel model, where each channel has two states, {\em good} and {\em bad} (or $1, 0$, respectively). The first algorithm is the upper confidence bound (UCB1) algorithm from \cite{auer}.  In \cite{tekin} we have proved that it has a logarithmic regret in the case of Markovian rewards when all arms are rested, by replacing the constant $2$ in the index calculation of UCB1 with $L$ and using a result from \cite{gillman}. Using Lezaud's bound as we have done in the present paper it can be shown that this modified UCB1 algorithm, shown in Figure \ref{fig:UCB_algo}, has a logarithmic regret for $L \geq 112 S^2_{\max} r^2_{\max} \hat{\pi}^2_{\max} /\epsilon_{\min}$ for the rested bandit problem. 


\begin{figure}[htb]
\vspace{0.05in}
\fbox {
\begin{minipage}{\columnwidth }
{Upper Confidence Bound (UCB1): }
\begin{algorithmic}[1]
\STATE {Initialize: $n=1$}

\FOR {$n\leq K$}
\STATE{play arm $n$; $n:=n+1$.} 
\ENDFOR
\WHILE {$n>K$} 
\STATE{$\bar{r}^i (T^i(n)) =\frac{r^i(1)+r^i(2)+...+r^i(T^i(n))}{T^i(n)}, ~\forall i$} 
\STATE{$g^i_{n,T^i(n)}= \bar{r}^i(T^i(n)) + \sqrt{\frac{L\ln n}{T^i(n)}}, ~\forall i$} 
\STATE{play arm $j$, such that $j=\arg\max_i g^i_{n, T^i(n)}$, update $r^j(n)$ and $T^j(n)$.} 
\STATE{$n:=n+1$}
\ENDWHILE
\end{algorithmic}
\end{minipage}
} \caption{The UCB1 algorithm.} \label{fig:UCB_algo}
\end{figure}

The second algorithm is an online randomized algorithm proposed in \cite{auer2}, referred to as the Exp3 algorithm and shown in Figure \ref{fig:Exp3_algo}.  The main distinction of Exp3 is that it is a randomized algorithm: given all past observations the algorithm's current action is the outcome of a random variable.  
Randomization is helpful when rewards from arms are determined by an adversary rather than a stochastic process.  This is the context in which Exp3 is introduced and studied in \cite{auer2}.  

\comment{
\begin{framed}
\hspace{-8 mm} \textbf{Exp3} \\

{\bf Parameters}: $a \in (0,1)$\\

{\bf Initialization}: $\omega^i(1)=1, \forall i \in \left\{1,2,\cdots,K\right\}$ \\

{\bf Loop}: for any $t$, set 
\begin{eqnarray*}
p^i(t)=(1-a)\frac{\omega^i(t)}{\sum_{j=1}^K \omega^j(t)} + \frac{a}{K}
\end{eqnarray*}
Then action $\alpha(t)$ is the outcome of a random variable $X(t)$ with pmf
\begin{eqnarray*}
P(X(t)=i)=p^i(t).
\end{eqnarray*} 
If $\alpha(t)=i$ set
\begin{eqnarray*}
w^i(t+1)=w^i(t)\exp(\frac{a r^i_{x(t)}}{K p^i(t)})
\end{eqnarray*}
else $w^i(t+1)=w^i(t)$.
\end{framed}
} 

\begin{figure}[htb]
\fbox {
\begin{minipage}{\columnwidth }
{Exp3:}
\begin{algorithmic}[1]
\STATE {Initialize: select parameter $a \in (0, 1)$ and set weights $w^i(1)=1, \forall i \in \left\{1,2,\cdots,K\right\}$}
\WHILE {(1)} 
\STATE{at time $n$ compute the probabilities 
$p^i(n)=(1-a)\frac{w^i(n)}{\sum_{j=1}^K w^j(n)} + \frac{a}{K}$, $\forall i$.} 
\STATE{take a random sample of the random variable $X(n)$ with pmf: 
$P(X(n)=i)=p^i(n)$; denote the outcome by $\alpha(n)$.} 
\STATE{play arm $\alpha(n)$, and get reward $r^{\alpha(n)}$.} 
\IF{$\alpha(n)=i$} 
\STATE{set weight $w^i(n+1)=w^i(n)\exp(\frac{a r^i(n)}{K p^i(n)})$}
\ELSE
\STATE{$w^i(n+1)=w^i(n)$}
\ENDIF
\ENDWHILE
\end{algorithmic}
\end{minipage}
} \caption{The Exp3 algorithm.} \label{fig:Exp3_algo}
\end{figure}

We simulate and compare the regret of these three algorithms averaged over 100 runs, under two scenarios, denoted S1 and S2, respectively.  Each scenario involves 5 two-state channels with varying transition probabilities.  The statistics and rewards used are given in Table \ref{table1}. 
Exp3 is run with two different values of $a$: $a_1=0.1$, $a_2= \min \left\{1, \sqrt{\frac{K \ln K}{(e-1)N}}\right\}$ where $N=10^5$ is the time horizon. All arms are assumed to be in stationary distribution at the beginning. $112 S^2_{\max} r^2_{\max} \hat{\pi}^2_{\max} /\epsilon_{\min}$ is equal to $9556$ in S1 and $1037.2$ in S2. 


\comment{
\begin{center}
    \begin{tabular}{ | l | l | l |  }
    \hline
    S1  & $p_{01}$, $p_{10}$ & $r_0$, $r_1$ \\ \hline
    ch.1 & .01, .03& .1, 1 \\ \hline
    ch.2 & .04, .01& .1, 1 \\ \hline
    ch.3 & .03, .01& .1, 1 \\ \hline
    ch.4 & .02, .01& .1, 1 \\ \hline
    ch.5 & .01, .02& .1, 1 \\ \hline
    S2  & $p_{01}$, $p_{10}$ & $\pi_1$, $\mu_1$ \\ \hline
    ch.1 & .1, .2 & .1, 1  \\ \hline
    ch.2 & .1, .3 & .1, 1  \\ \hline
    ch.3 & .5, .1 & .1, 1  \\ \hline
    ch.4 & .1, .4 & .1, 1  \\ \hline
    ch.5 & .1, .5 & .1, 1  \\ \hline
    \end{tabular}
    \label{table1}
\end{center}
}

\begin{table}[h] \label{table1}
\vspace{10pt}
\begin{center}
    \begin{tabular}{ l|c|c || l |c|c}
    \hline
    S1  & $p_{01}$, $p_{10}$ & $r_0$, $r_1$ &   S2  & $p_{01}$, $p_{10}$ & $r_0$, $r_1$ \\ \hline
    ch.1 & .01, .03& .1, 1 & ch.1 & .1, .2 & .1, 1  \\ \hline
    ch.2 & .04, .01& .1, 1 & ch.2 & .1, .3 & .1, 1  \\ \hline
    ch.3 & .03, .01& .1, 1 & ch.3 & .5, .1 & .1, 1  \\ \hline
    ch.4 & .02, .01& .1, 1 & ch.4 & .1, .4 & .1, 1  \\ \hline
    ch.5 & .01, .02& .1, 1 & ch.5 & .1, .5 & .1, 1  \\ \hline
    \end{tabular}
\caption{Channel parameters} 
\vspace{-25pt}
\end{center}
\end{table}

\begin{figure}[ht]
\includegraphics[width=3.3in]{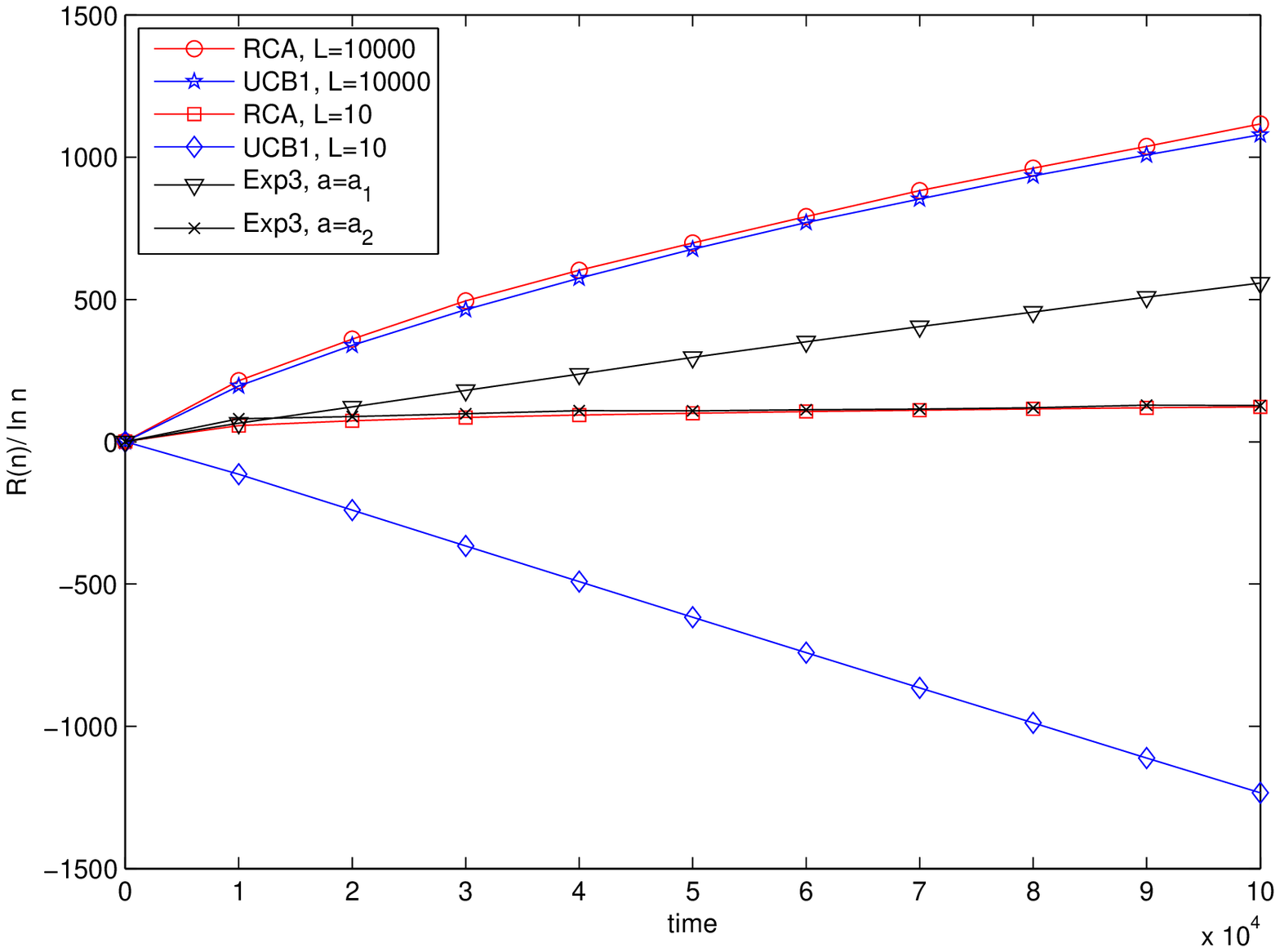}
\vspace{-15pt}
\caption{Regret under scenario S1}
\vspace{-15pt}
\label{figure1}
\end{figure}

\begin{figure}[ht]
\includegraphics[width=3.3in]{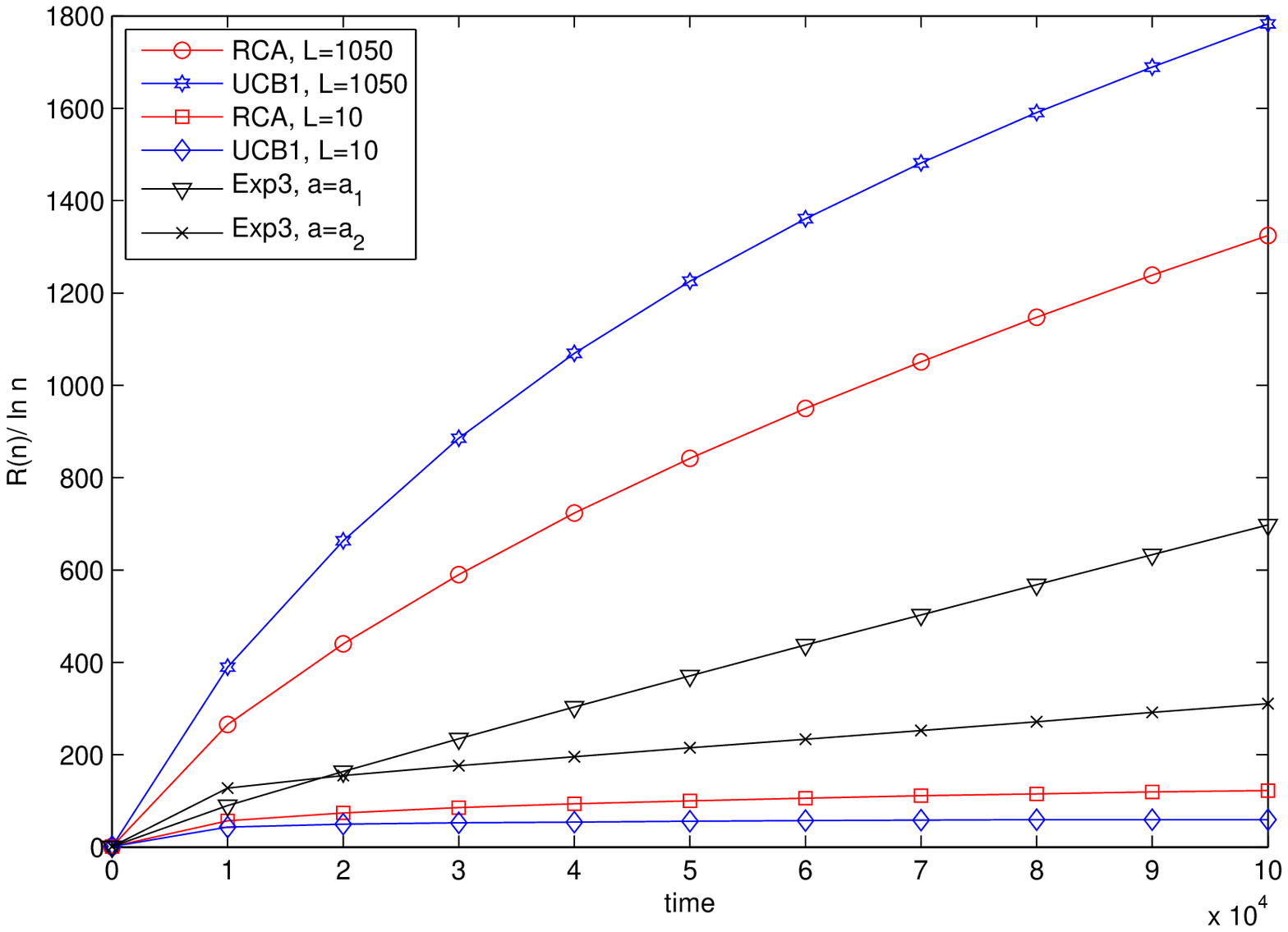}
\vspace{-15pt}
\caption{Regret under scenario S2}
\vspace{-15pt}
\label{figure2}
\end{figure}


Results are shown in Figures \ref{figure1} and \ref{figure2}, under scenarios S1 and S2, respectively.  
We make the following observations from this set of curves.  Firstly, both RCA's and UCB1's performance improves when a smaller value of $L$ is used.  This suggests that the condition $L \geq 112 S^2_{\max} r^2_{\max} \hat{\pi}^2_{\max} /\epsilon_{\min}$ is sufficient but not in general necessary for the logarithmic regret to hold.  Secondly, Exp3 shows good performance when $a_2$ is the constant choice, which utilizes the knowledge of time horizon.  If the time horizon is not given, then Exp3 has a linear regret instead as was proven in \cite{auer2}. 
Lastly, overall the performance of UCB1 is competitive compared to RCA, which has been shown to have logarithmic regret in the previous section. In particularly, in Figure \ref{figure1} for $L=10$ UCB1 outperforms RCA significantly.  This is because in this case the channels are very bursty, thus updating the indices at every time step in UCB1 is a better option than waiting for regenerative cycles to occur in RCA, which can take a long time for an update to occur.  These results suggest that there may exist logarithmic bounds for UCB1 as well.  Furthermore, they suggest obvious ways to improve the performance of RCA.  
However, as discussed earlier due to the restless nature of the arms when the indices are updated constantly the problem becomes intractable.  
It remains an interesting future study to show such bounds for UCB1.  

\vspace{5pt}
\section{Conclusion} \label{sec:conc}
\vspace{5pt}
We considered the OSA problem when the primary users' activities are modeled as generic finite-state Markov chains.  This was formulated as a single-player restless bandit problem. We proposed an algorithm that updates the sample mean based indices using regenerative sample paths and showed that its regret can be upper bounded uniformly and logarithmically over time. This is the first results showing that $\log$-regret is possible in a restless bandit learning problem.  
We numerically compare the performance of RCA with two other algorithms, UCB1 and Exp3, and conjectured that similar logarithmic bounds may exist for UCB1 as well. 
\vspace{5pt}
\bibliographystyle{IEEE}
\bibliography{OSA}

\end{document}